\newcommand{\Z}{{\mathbb Z}}
\newcommand{\Ker}{{\rm Ker}}
\newcommand{\rk}{{\rm rk}}
\newcommand{\Spec}{{\rm Spec} }
\newcommand{\gm}{\mathbb G_{m}}
\newcommand{\z}{{\mathbb{Z}}}
\newcommand{\lrto}{\longrightarrow}
\newcommand{\uz}{\underline{\mathbb{Z}}}
\theoremstyle{plain}
\newtheorem{theor}{Theorem}[section]
\newtheorem{prop}[theor]{Proposition}
\newtheorem{lemma}[theor]{Lemma}
\theoremstyle{remark}
\newtheorem{rmk}[theor]{Remark}
\newtheorem{examp}[theor]{Example}
\theoremstyle{definition}
\newtheorem{defin}[theor]{Definition}
\newtheorem{defin-prop}[theor]{Definition-Proposition}
\newcommand{\quash}[1]{}  %%Anything in \quash is ignored       added by Denis in November
\title{Tangent space to Milnor $K$-groups of rings\footnotetext{This work is supported by the RSF under a grant 14-50-00005.}}
\author{S.\,O. Gorchinskiy, D.\,V. Osipov}
\date{}
\begin{document}
\maketitle

%\tableofcontents

\begin{abstract}
We prove that the tangent space to the $(n+1)$-th Milnor $K$-group of a ring $R$ is isomorphic to group of $n$-th absolute K\"ahler differentials of $R$ when the ring $R$ contains $\frac{1}{2}$ and has sufficiently many invertible elements. More precisely, the latter condition is that $R$ is weakly $5$-fold stable in the sense of Morrow.
\end{abstract}

\section{Introduction}

The Milnor $K$-group $K^M_n(R)$ of a commutative associative unital ring $R$ is generated by symbols $\{r_1,\ldots,r_n\}$, $r_i\in R^*$, that satisfy the Steinberg relations (see Definition~\ref{defin:Milnor}). Studying Milnor $K$-groups, one often requires that $R$ has sufficiently many invertible elements. In this context, van der Kallen~\cite{vdK2} has introduced the notion of a $k$-fold stable ring for a natural number $k$ (see Remark~\ref{rmk:notstable}(i)). Recently Morrow~\cite{Mor} defined weakly $k$-fold stable rings (see Definition~\ref{defin:wstable}). Note that for any commutative associative unital ring $A$, the ring of Laurent series~$A((t))$ is weakly $k$-fold stable for all $k$, while for many natural rings $A$, the ring~$A((t))$ is not $k$-fold stable for any $k$ (see Remark~\ref{rmk:notstable}(ii)).

Let $\varepsilon$ be a formal variable such that $\varepsilon^2=0$. By a tangent space $TK^M_n(R)$ to Milnor $K$-group, we mean the kernel of the natural homomorphism $K^M_n\big(R[\varepsilon]\big)\to K^M_n(R)$ (see Definition~\ref{defin:tang}). Let $\Omega_R^n$ denote the group of $n$-th absolute K\"ahler differentials of $R$. Following Bloch~\cite{Blo}, one constructs a natural homomorphism (see Definition~\ref{defin:B})
$$
B\;:\;TK^M_{n+1}(R)\lrto \Omega^{n}_R\,.
$$
In particular, for any collection of invertible elements $r_1,\ldots,r_n\in R^*$ and any element ${s\in R}$, the homomorphism $B$ sends the symbol $\{1+sr_1\ldots r_n\,\varepsilon,r_1,\ldots,r_n\}$ to the differential form $sdr_1\wedge\ldots\wedge dr_n$ (see Example~\ref{rem:explB}).

The aim of the paper is to prove that $B$ is an isomorphism when $R$ contains $\frac{1}{2}$ and is weakly $5$-fold stable (see Theorem~\ref{thm:tangentMilnor}).

\medskip

Besides being of independent interest, this statement is also of utmost importance for the proof of the explicit formula and the universal property of the higher-dimensional Contou-Carr\`ere symbol. The proof will be given in the work~\cite{GO2} (the explicit formula was also announced in the short note~\cite{GO1}). In this proof, Theorem~\ref{thm:tangentMilnor} is applied to the ring of iterated Laurent series $R=A((t_1))\ldots((t_n))$ over a ring $A$. Thus it is important that Theorem~\ref{thm:tangentMilnor} is valid for any weakly $5$-fold stable ring and not only for a $5$-fold stable ring.

\medskip

Let us explain how to deduce Theorem~\ref{thm:tangentMilnor} in a weaker form from previously known results of van der Kallen and Bloch. Namely, in~\cite{vdK1} it was constructed a natural isomorphism $TK_2(R)\simeq \Omega^1_R$ for any ring $R$ that contains~$\frac{1}{2}$, where $TK_2(R)$ is the tangent space to the algebraic \mbox{$K$-group}~$K_2(R)$. Later it was proved in~\cite[Theor.\,7.1,\,8.4]{vdK2} that for a $5$-fold stable ring~$R$, there is an isomorphism $K_2^M(R)\simeq K_2(R)$, whence $TK_2^M(R)\simeq TK_2(R)$. The resulting isomorphism ${TK_2^M(R)\simeq \Omega^1_R}$ coincides with~$B$. This proves Theorem~\ref{thm:tangentMilnor} when~$n=1$ and $R$ is a \mbox{$5$-fold} stable ring that contains~$\frac{1}{2}$. Using the case $n=1$ and closely following the strategy from~\cite{Blo}, it is possible to obtain Theorem~\ref{thm:tangentMilnor} when $n$ is an arbitrary natural number and $R$ is a $5$-fold stable ring that contains~$\frac{1}{2}$ (but not a weakly $5$-fold stable ring as in Theorem~\ref{thm:tangentMilnor} itself). See also a recent paper of Dribus~\cite{Drib} for a more general statement about $5$-fold stable rings, which is an analog for Milnor $K$-groups of the famous theorem of Goodwillie~\cite{Goo}.

\medskip

Notice that the above approach to a weaker form of Theorem~\ref{thm:tangentMilnor} is based on the machinery of algebraic $K$-theory, in particular, on hard results of van~der~Kallen~\cite{vdK2} about elements in the Steinberg extension. However, the theorem is essentially a statement about relations between symbols in Milnor $K$-groups~$K_{n+1}^M\big(R[\varepsilon]\big)$ and it is natural to expect that there exists a direct proof in terms of symbols only. In this paper, we give such proof.

\medskip

We prove Theorem~\ref{thm:tangentMilnor} following the strategy from~\cite{Blo} but replacing van der Kallen's results on the algebraic $K$-group $K_2$ by explicit calculations with symbols in the Milnor $K$-group~$K^M_2\big(R[\varepsilon]\big)$ (see Lemmas~\ref{lemma:epseps} and~\ref{lemma:cool}). We believe that these calculations have their own interest as well.

\medskip

The authors are grateful to C.\,Shramov for helpful discussions and to the referee for suggestions concerning the exposition.

\section{Statement of the main result}\label{section:stat}

For short, by a ring we mean a commutative associative unital ring and by a {\it group functor}, we mean a covariant functor from the category of commutative associative unital rings to the category of abelian groups. Fix a natural number $n\geqslant 0$.

\subsection{Milnor $K$-groups}\label{Milnor-K-group}

Recall the following well-known definition.

\begin{defin}\label{defin:Milnor}
The {\it $n$-th Milnor $K$-group} $K_n^M(R)$ of a ring $R$ is the $n$-th homogenous component of the graded ring
$$
\mbox{$\bigoplus\limits_{n\geqslant 0}K_n^M(R):=\bigoplus\limits_{n\geqslant 0}(R^*)^{\otimes\,n}/ \, {\rm St}$}\,,
$$
where $\rm St$ is the homogenous ideal generated by all elements of type $r\otimes (1-r)\in R^*\otimes_{\z} R^*$.
\end{defin}

We denote the group law in $K_n^M(R)$ additively. Explicitly, $K_n^M(R)$ is the quotient group of the group~$(R^*)^{\otimes\,n}$ by the subgroup generated by all elements of type
$$
r_1\otimes\ldots\otimes r_i\otimes r\otimes(1-r)\otimes r_{i+3}\otimes\ldots\otimes r_n\,,
$$
which are called {\it Steinberg relations}. Note that in this tensor, $r$ and $1-r$ come one after another and are not separated. The class in $K^M_n(R)$ of a tensor \mbox{$r_1\otimes\ldots\otimes r_n$}, where $r_i\in R^*$, is denoted by~$\{r_1,\ldots,r_n\}$ and is usually called a {\it symbol}. Thus we do not require additional relations on symbols besides the multilinearity and the Steinberg relations. In particular, we have that $K^M_0(R)=\Z$ and $K_1^M(R)=R^*$.

\medskip

Clearly, $K_n^M$ is a group functor. Denote by $\Omega^n$ the group functor that sends a ring~$R$ to the group of $n$-th absolute K\"ahler differentials $\Omega^n_R$. It is easy to check that there is a morphism of group functors
$$
d\log\;:\; K_n^M\lrto \Omega^n\,,\qquad \{r_1,\ldots,r_n\}\longmapsto \frac{dr_1}{r_1}\wedge\ldots\wedge\frac{dr_n}{r_n}\,.
$$

\quash{
\medskip

For any ring $R$, we also have algebraic $K$-groups $K_m(R)$, which are functorial with respect to $R$ as well. There are canonical decompositions of group functors
$$
K_0\simeq \uz\times\widetilde{K}_0\,,\qquad K_1\simeq \gm\times SK_1\,.
$$
Denote by $\rk\colon K_0\to\uz$ and $\det\colon K_1\to \gm$ the corresponding projections. Explicitly, they are defined by taking the rank of a finitely generated projective modules and by taking the determinant of a matrix, respectively.

\medskip

For any ring $R$, Loday~\cite{L} constructed a graded-commutative product between algebraic $K$-groups, which is functorial with respect to $R$:
$$
K_i(R)\otimes_{\mathbb{Z}} K_j(R)\lrto K_{i+j}(R)\,,\qquad  a\otimes b \longmapsto a\cdot b\,,\qquad i,j\geqslant 0\,.
$$
The Loday product between the group subfunctors $\gm\subset K_1$ factors through the Milnor $K$-groups (see, e.g.,~\cite[\S\S\,1,\,2]{S}), that is, the product decomposes into morphisms of group functors $(\gm)^{\times m}\to K^M_m\to K_m$\,, where the second morphism sends a symbol $\{r_1,\ldots,r_m\}$ to the Loday product $r_1\cdot\ldots\cdot r_m$.}

\subsection{Weakly stable rings}

Recall the following definition given by Morrow in~\cite[Def.\,3.1]{Mor} (this is a slightly different form, which is equivalent to the one from op.\,cit.).

\begin{defin}\label{defin:wstable}
Given a natural number $k\geqslant 2$, a ring $R$ is called {\it weakly $k$-fold stable} if for any collection of elements $r_1,\ldots,r_{k-1}\in R$, there is an invertible element $r\in R^*$ such that the elements $r_1+r,\ldots,r_{k-1}+r$ are invertible in $R$.
\end{defin}

\begin{examp}\label{exam:wstable}
\hspace{0cm}
\begin{itemize}
\item[(i)]
A ring $R$ is weakly $2$-fold stable if and only if any element from $R$ is a sum of two invertible elements.
\item[(ii)]
A semi-local ring is weakly $k$-fold stable if and only if each of its residue fields contains at least $k+1$ elements,~\cite[Rem.\,3.3]{Mor}.
\item[(iii)]
For any ring $A$, the ring of Laurent series $A((t))=A[[t]][t^{-1}]$ is weakly $k$-fold stable for all $k\geqslant 2$. In fact, one can take an invertible element $r$ in  Definition~\ref{defin:wstable} to be equal to an element~$t^i$ for a suitable~\mbox{$i\in \Z$}.
\end{itemize}
\end{examp}

\begin{rmk}\label{rmk:notstable}
\hspace{0cm}
\begin{itemize}
\item[(i)]
Let $k \geqslant 1$.
Recall from~\cite{vdK2} that a ring $R$ is {\it $k$-fold stable} if for any collection of elements $r_1,s_1,\ldots,r_k,s_k$ with $r_i,s_i\in R$ such that
$$
r_1R+s_1R=\ldots=r_kR+s_kR=R\,,
$$
there is an element $r\in R$ such that $r_1+rs_1,\ldots,r_k+rs_k\in R^*$. Note that if $k \geqslant 2$, then  $k$-fold stability implies $(k-1)$-fold stability and also  implies weak $k$-fold stability,~\cite[\S\,3.1]{Mor}.
\item[(ii)]
Following the same idea as in~\cite[Rem.\,3.5]{Mor}, we observe that the ring of Laurent series $A((t))$ can be even not $1$-fold stable and hence not $k$-fold stable for any $k\geqslant 1$. Namely, suppose that $\Spec(A)$ is connected, $A$ has no nilpotent elements and has a non-invertible element $a\in A$. Then the pair $a,at^{-1}+1$ breaks the $1$-fold stability for $A((t))$, that is, there is no a Laurent series $f\in A((t))$ such that $a+f(at^{-1}+1)\in A((t))^*$. Indeed, one shows that the first non-zero coefficient in $a+f(at^{-1}+1)$ belongs to the ideal $(a)\subset A$. Hence this coefficient is not invertible in $A$. Recall that for $A$ as above, a Laurent series in $A((t))$ is invertible if and only if its first non-zero coefficient is invertible, see~\cite[Lem.\,1.3]{CC1}
and~\cite[Lem.\,0.8]{CC2}. This implies that $a+f(at^{-1}+1)$ is not invertible.
\end{itemize}
\end{rmk}

\subsection{Tangent spaces to group functors}\label{subsection:statementtangent}

Below $\varepsilon$ denotes a formal variable that satisfies $\varepsilon^2=0$. Thus for any ring $R$, we have an isomorphism $R[\varepsilon]\simeq R[x]/(x^2)$, where $x$ is a formal variable.

\begin{defin}\label{defin:tang}
Given a group functor $F$, a {\it tangent space} $TF$ to $F$ is the group functor
$$
TF(R):=\Ker\big(F\big(R[\varepsilon]\big)\to F(R)\big)\,.
$$
\end{defin}

In particular, there is a decomposition $F\big(R[\varepsilon]\big)\simeq F(R)\times TF(R)$.

\begin{examp}\label{examp:tangspace}
\hspace{0cm}
\begin{itemize}
\item[(i)]
We have that $d(\varepsilon^2)=2\varepsilon d\varepsilon=0$ in $\Omega^1_{R[\varepsilon]}$ and a calculation shows that there is an isomorphism of $R$-modules
\begin{equation}\label{eq:decomforms}
T\Omega^{n+1}(R)\simeq \big(\varepsilon\,\Omega^{n+1}_R\big)\oplus \big(d\varepsilon\wedge\Omega^n_R\big)\oplus \left(\big(\varepsilon d\varepsilon\wedge\Omega^n_R\big)/2\big(\varepsilon d\varepsilon\wedge\Omega^n_R\big) \right)\,.
\end{equation}
In particular, if $\frac{1}{2}\in R$, then the last summand equals zero.
\item[(ii)]
There is a group decomposition  $R[\varepsilon]^*\simeq R^*\times (1+R\,\varepsilon)$. It follows that the subgroup $TK^M_{n+1}(R)\subset K^M_{n+1}(R)$ is generated by symbols $\{u_1,\ldots,u_n\}$, where each element $u_i\in R[\varepsilon]^*$ is either from the subgroup $R^*$ or from the subgroup $1+R\,\varepsilon$, and at least one $u_i$ is from $1+R\,\varepsilon$.
\quash{of type
$$
\{s_1,\ldots,1+r\,\varepsilon, \ldots, s_{n+1}\}\,,
$$
where $s_i\in R[\varepsilon]^*$ and $r\in R$.}
\end{itemize}
\end{examp}

Following a construction of Bloch~\cite{Blo}, we give the next definition.

\begin{defin}\label{defin:B}
Denote by
$$
B\;:\;TK^M_{n+1}\lrto \Omega^{n}
$$
the morphism of group functors obtained as the composition of $d\log\colon TK^M_{n+1}\to T\Omega^{n+1}$ and the projection to the direct summand $\Omega^{n}\simeq d\varepsilon\wedge\Omega^n$ in decomposition~\eqref{eq:decomforms}.
\end{defin}

\begin{examp}\label{rem:explB}
For any collection of invertible elements $r_1,\ldots,r_{n}\in R^*$ and any element $s\in R$, there is an equality
$$
B\,\{1+sr_1\ldots r_n\,\varepsilon,r_1,\ldots,r_n\}=sdr_1\wedge\ldots \wedge dr_n\,.
$$
Indeed, we have that
$$
\frac{d(1+sr_1\ldots r_n\,\varepsilon)}{1+sr_1\ldots r_n\,\varepsilon}=(1-sr_1\ldots r_n\,\varepsilon)d(sr_1\ldots r_n\,\varepsilon)=
$$
$$
=\varepsilon d(sr_1\ldots r_n)+sr_1\ldots r_n d\varepsilon-(sr_1\ldots r_n)^2\varepsilon d\varepsilon\,.
$$
Therefore, there are equalities
\begin{multline*}
d\log\,\{1+sr_1\ldots r_n\,\varepsilon,r_1,\ldots,r_n\}=\frac{d(1+sr_1\ldots r_n\,\varepsilon)}{1+sr_1\ldots r_n\,\varepsilon}\wedge\frac{dr_1}{r_1}\wedge\ldots\wedge\frac{dr_n}{r_n}= \\
=\varepsilon \left(d(sr_1\ldots r_n)\wedge\frac{dr_1}{r_1}\wedge\ldots\wedge\frac{dr_n}{r_n}\right)+d\varepsilon\wedge \left(sdr_1\wedge\ldots\wedge dr_n\right)-
\\
-\varepsilon d\varepsilon\wedge \left(s^2r_1\ldots r_n dr_1\wedge\ldots\wedge dr_n\right)\,.
\end{multline*}
The projection of this element to the direct summand $\Omega_R^{n}\simeq d\varepsilon\wedge\Omega_R^n$ in decomposition~\eqref{eq:decomforms} equals $sdr_1\wedge\ldots\wedge dr_n$.
\end{examp}

\medskip

Here is the main result of the paper.

\begin{theor}\label{thm:tangentMilnor}
Let $R$ be a ring such that $R$ contains $\frac{1}{2}$ and is weakly $5$-fold stable. Then the homomorphism $B\colon TK^M_{n+1}(R)\to\Omega_R^{n}$ is an isomorphism.
\end{theor}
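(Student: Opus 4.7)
The proof plan is to construct an explicit two-sided inverse to $B$, with the main technical work concentrated at $n=1$.

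Surjectivity of $B$ is essentially immediate from Example~\ref{rem:explB}: every element of the form $s\,dr_1\wedge\cdots\wedge dr_n$ with $r_i\in R^*$, $s\in R$ lies in the image of $B$. To cover arbitrary generators $s_0\,ds_1\wedge\cdots\wedge ds_n$ of $\Omega^n_R$, weak $2$-fold stability (a consequence of weak $5$-fold stability) is enough: for each index $i$ pick an invertible $r_i\in R^*$ with $s_i+r_i$ also invertible, and write $ds_i=d(s_i+r_i)-dr_i$; expanding the wedge product produces a sum of terms of the desired form.

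For injectivity, the plan is to define the candidate inverse
\[
C\colon\Omega^n_R\lrto TK^M_{n+1}(R),\qquad s\,dr_1\wedge\cdots\wedge dr_n\longmapsto \{1+sr_1\cdots r_n\,\varepsilon,\,r_1,\ldots,r_n\}
\]
on generators with $r_i\in R^*$ and $s\in R$, and to verify both that $C$ is well-defined (additive and $\Z$-multilinear in $s,r_1,\ldots,r_n$, antisymmetric in the $r_i$, and satisfies the Leibniz rule) and that $C$ is surjective. The identity $B\circ C=\mathrm{id}$ is then immediate from Example~\ref{rem:explB}. Surjectivity of $C$ amounts to showing that every generator of $TK^M_{n+1}(R)$ of the shape in Example~\ref{examp:tangspace}(ii)---a symbol whose entries alternate between $R^*$ and $1+R\,\varepsilon$ with at least one entry in $1+R\,\varepsilon$---can be reduced, modulo Steinberg relations, to a sum of symbols in the canonical form above.

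The main technical obstacle is the case $n=1$ and the corresponding symbol calculus inside $K^M_2(R[\varepsilon])$. Here the two lemmas~\ref{lemma:epseps} and~\ref{lemma:cool} do the decisive work. The expectation is to prove (a) that $\{1+a\varepsilon,\,1+b\varepsilon\}=0$ for all $a,b\in R$, using $\tfrac{1}{2}\in R$ via the identities $\{u,-u\}=0$ and $\{u,u\}=\{u,-1\}$; and (b) that every symbol $\{1+a\varepsilon,\,b\}$ with $a\in R$, $b\in R^*$ can be rewritten in the canonical form $\{1+sr\,\varepsilon,\,r\}$ by applying the Steinberg relation $\{r,1-r\}=0$ to a suitable translate of $a$ (or $b$). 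Weak $5$-fold stability enters exactly here, by guaranteeing a single invertible $u\in R^*$ for which up to four quantities such as $a+u,\,b+u,\,1-(a+u),\,1-(b+u)$ become simultaneously units, allowing the Steinberg relation to be applied. The induction step $n-1\Rightarrow n$ is then a bookkeeping exercise: one peels an invertible factor off a symbol and applies the $n=1$ calculation to the remaining pair entering $K^M_2(R[\varepsilon])$, propagating the result through the multilinearity and graded commutativity of Milnor $K$-theory.
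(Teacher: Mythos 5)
Your overall architecture coincides with the paper's: both construct a map $C$ (the paper's $F$) on generators $s\,dr_1\wedge\cdots\wedge dr_n\mapsto\{1+sr_1\cdots r_n\,\varepsilon,r_1,\ldots,r_n\}$, verify that it respects the defining relations of $\Omega^n_R$, show it is surjective by proving that $TK^M_{n+1}(R)$ is generated by symbols of this canonical shape, and conclude via $B\circ C=\mathrm{id}$ together with surjectivity of $C$. However, the technical core of your sketch has two genuine gaps. First, your route to $\{1+a\varepsilon,1+b\varepsilon\}=0$ does not work as described: from $\{u,-u\}=0$ and $\{u,u\}=\{u,-1\}$, plus unique $2$-divisibility of $TK^M_2(R)$ (Proposition~\ref{prop:divis}), you get $\{u,u\}=0$ for $u\in 1+R\,\varepsilon$, and then biadditivity applied to $uv=1+(a+b)\varepsilon$ yields only the antisymmetry $\{1+a\varepsilon,1+b\varepsilon\}=-\{1+b\varepsilon,1+a\varepsilon\}$, not vanishing. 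The actual proof (Lemma~\ref{lemma:epseps}) needs a different input: the Steinberg relation applied to $\{a+b\,\varepsilon,\,1-a-b\,\varepsilon\}$, factored as $a\left(1+\tfrac{b}{a}\varepsilon\right)$ and $(1-a)\left(1-\tfrac{b}{1-a}\varepsilon\right)$, combined with the substitution $\varepsilon\mapsto-\varepsilon$ and weak $4$-fold stability to arrange the required invertibilities.

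Second, and more seriously, your item (b) misidentifies the decisive relation. Rewriting $\{1+a\varepsilon,b\}$ with $b\in R^*$ in the form $\{1+sr\,\varepsilon,r\}$ is a tautology (take $r=b$, $s=ab^{-1}$) and requires no work. What actually must be proved for $C$ to be well defined is compatibility with the relation $d(r_1+r_2)=dr_1+dr_2$ in $\Omega^1_R$, i.e., the Suslin--Yarosh-type identity
$$
\{1+r_1\,\varepsilon,r_1\}+\cdots+\{1+r_N\,\varepsilon,r_N\}=0\quad\text{whenever } r_1+\cdots+r_N=0,\ r_i\in R^*,
$$
which is the content of Lemma~\ref{lemma:cool}. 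You list ``additive in $r_1,\ldots,r_n$'' among the conditions to check but supply no mechanism for it; this identity is the heart of the well-definedness argument, and its proof (reduction to $N=2,3$ by weak $4$-fold stability, then a two-fold application of the Steinberg relation producing the expression $\left\{-\tfrac{r_2}{r_1},\,1-\tfrac{r_1r_2}{r_1+r_2}\varepsilon\right\}$, symmetric in $r_1,r_2$) is precisely where the real work lies. A smaller inaccuracy: weak $5$-fold stability is not used to make quantities like $1-(a+u)$ invertible; it enters through Lemma~\ref{lemma:Morrow} ($\{r,-r\}=0$ and anticommutativity in $K^M_2(R[\varepsilon])$), which is needed both for the alternating property of $C$ and for moving the entry from $1+R\,\varepsilon$ to the front of a symbol in the generation step.
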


We do not know whether Theorem~\ref{thm:tangentMilnor} is true for weakly $4$-fold stable rings containing~$\frac{1}{2}$.

\section{Proof of the main result}

As above, $n\geqslant 0$ is a natural number and $\varepsilon$ denotes a formal variable that satisfies the relation $\varepsilon^2=0$.

\quash{In addition, denote by
$$
\varphi\;:\; K^M_{n+1}\big(R[\varepsilon]\big)\lrto K^M_{n+1}\big(R[\varepsilon]\big)
$$
the group endomorphism induced by the ring endomorphism of $R[\varepsilon]$ that sends $\varepsilon$ to $2\varepsilon$.}

\subsection{Auxiliary results}\label{subsection:auxK2}

We start with the following elementary lemma.

\begin{lemma}\label{lemma:filtr}
Let $G$ be an Abelian group. Suppose there exists an automorphism \mbox{$\varphi\colon G\to G$} such that $G$ is generated by elements $g\in G$ that satisfy $\varphi(g)=2^i\cdot g$ for some natural number $i\geqslant 1$ depending on $g$. Then the group $G$ is uniquely $2$-divisible.
\end{lemma}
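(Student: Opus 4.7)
The plan is to establish unique $2$-divisibility of $G$ in two steps: first show multiplication by $2$ is surjective on $G$, then show it is injective. Surjectivity is immediate from the hypothesis: for each generator $g$ with $\varphi(g) = 2^i g$, applying $\varphi^{-1}$ gives $g = 2^i \varphi^{-1}(g) \in 2 G$, so the subgroup $2 G$ contains every generator and hence equals $G$.

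For injectivity, the first step is to introduce the eigensubgroups $H_i := \{h \in G : \varphi(h) = 2^i h\}$ for each $i \geq 1$. Each $H_i$ is stable under $\varphi^{\pm 1}$, and on $H_i$ multiplication by $2^i$ coincides with the restriction of the automorphism $\varphi$, hence is bijective; from this one deduces quickly that multiplication by $2$ itself is bijective on $H_i$, so each $H_i$ is uniquely $2$-divisible, and in particular $2$-torsion-free. By hypothesis $G = \sum_{i \geq 1} H_i$. Now, supposing $2 x = 0$, I would write $x = g_1 + \ldots + g_m$ with $g_j \in H_{i_j}$ and the $i_j \geq 1$ distinct (combining terms from a common $H_i$). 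Applying $\varphi^k$ to $2x = 0$ yields
$$
\sum_{j=1}^m 2^{i_j k} \cdot (2 g_j) = 0 \qquad \text{for all } k \geq 0.
$$
The first $m$ of these equations form a Vandermonde system with integer coefficient matrix of determinant $D = \prod_{j < l}(2^{i_l} - 2^{i_j}) = 2^A \cdot N$ with $N$ odd. Multiplying by the adjugate gives $D \cdot (2 g_j) = 0$ for each $j$; using $2$-torsion-freeness of $H_{i_j}$ to cancel the power of $2$, one obtains $N g_j = 0$, and summing produces $N x = 0$. Combined with $2 x = 0$ and B\'ezout (since $\gcd(2, N) = 1$), this forces $x = 0$.

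The main obstacle is this injectivity step. The natural map $\bigoplus_{i \geq 1} H_i \to G$ is generally not injective --- distinct eigensubgroups can share odd-torsion elements --- so the decomposition $x = \sum g_j$ is non-unique and one cannot hope to show that each $g_j$ individually vanishes. The Vandermonde plus B\'ezout maneuver sidesteps this difficulty by extracting from the single relation $2 x = 0$ a companion relation $N x = 0$ with $N$ odd; the coprimality of $2$ and $N$ then collapses $x$ to zero without any need to control the eigenspace decomposition.
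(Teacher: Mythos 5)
Your proof is correct, but it takes a genuinely different route from the paper's. The paper introduces the increasing filtration $F_l\,G$ generated by all eigenvectors with exponent $i\leqslant l$, observes that this filtration is preserved by $\varphi$ and $\varphi^{-1}$ (hence $\varphi$ induces an automorphism of each quotient $F_l\,G/F_{l-1}\,G$), and notes that on that quotient $\varphi$ acts as multiplication by $2^l$; bijectivity of multiplication by $2$ on $G$ then follows by d\'evissage along the exhaustive filtration. You instead work with the honest eigensubgroups $H_i$, prove each is uniquely $2$-divisible, and handle the failure of $G$ to decompose as a direct sum of the $H_i$ by a Vandermonde--adjugate computation: from $2x=0$ you extract the relations $\sum_j 2^{i_jk}(2g_j)=0$, kill the $2g_j$ up to the determinant $D=2^A N$, cancel powers of $2$ inside each $2$-torsion-free $H_{i_j}$ to get $Ng_j=0$ with $N$ odd, and conclude by B\'ezout. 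Both arguments confront the same obstacle (the eigensubgroups are not independent); the paper's filtration trick disposes of it in a few lines and generalizes painlessly, while your version is more computational but entirely self-contained and makes the role of the odd part of the Vandermonde determinant explicit. All the individual steps you use check out: the stability of $H_i$ under $\varphi^{\pm1}$, the identity $\varphi^k(g_j)=2^{i_jk}g_j$, the integrality of the adjugate, and the factorization $2^{i_l}-2^{i_j}=\pm 2^{\min(i_j,i_l)}\cdot(\text{odd})$.
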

\begin{proof}
We need to show that multiplication by $2$ is a bijection from $G$ to itself. Define the following increasing filtration on $G$: put $F_0\,G=\{0\}$ and let $F_l\,G$, $l\geqslant 1$, be the subgroup generated by elements $g\in G$ that satisfy $\varphi(g)=2^i\cdot g$ for some natural number~$i$, ${1\leqslant i\leqslant l}$. It is enough to show that multiplication by $2$ is a bijection on each adjoint quotient $F_l\,G/F_{l-1}\,G$, $l\geqslant 1$. It is easily seen that the filtration is preserved by any endomorphism of the group $G$ that commutes with $\varphi$. In particular, the filtration is preserved by the automorphism $\varphi$ itself and by the inverse $\varphi^{-1}$. Therefore the automorphism $\varphi$ induces an automorphism on each adjoint quotient $F_l\,G/F_{l-1}\,G$. On the other hand, by construction of the filtration, the automorphism $\varphi$ acts on $F_l\,G/F_{l-1}\,G$ as multiplication by~$2^l$. This finishes the proof.
\end{proof}

Lemma~\ref{lemma:filtr} implies that following useful result.

\begin{prop}\label{prop:divis}
Assume that a ring $R$ contains $\frac{1}{2}$. Then the group $TK_{n+1}^M(R)$ is uniquely $2$-divisible.
\end{prop}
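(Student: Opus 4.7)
The plan is to apply Lemma~\ref{lemma:filtr} to the group $G = TK^M_{n+1}(R)$, which reduces the problem to finding a suitable automorphism $\varphi$ that acts as multiplication by a power of $2$ on a set of generators.

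The natural candidate for $\varphi$ is the map induced by the ring homomorphism $\sigma \colon R[\varepsilon] \to R[\varepsilon]$ that is the identity on $R$ and sends $\varepsilon \mapsto 2\varepsilon$. Since $\frac{1}{2} \in R$, this $\sigma$ is a ring automorphism of $R[\varepsilon]$, so it induces an automorphism $\sigma_{*}$ of $K^M_{n+1}\bigl(R[\varepsilon]\bigr)$. Moreover, the reduction of $\sigma$ modulo~$\varepsilon$ is the identity on $R$, so $\sigma_{*}$ commutes with the projection to $K^M_{n+1}(R)$ and hence restricts to an automorphism $\varphi$ of the kernel $TK^M_{n+1}(R)$.

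Next I would check the action of $\varphi$ on the distinguished generators of $TK^M_{n+1}(R)$ described in Example~\ref{examp:tangspace}(ii). Such a generator has the form $g=\{u_1,\ldots,u_{n+1}\}$ where each $u_j$ lies in $R^*$ or in $1+R\,\varepsilon$, and at least one, say exactly $i \geqslant 1$ of them, lies in $1+R\,\varepsilon$. For $u_j \in R^*$ we have $\sigma(u_j)=u_j$, while for $u_j=1+r_j\varepsilon$ the relation $\varepsilon^2=0$ gives
$$
\sigma(u_j)=1+2r_j\varepsilon=(1+r_j\varepsilon)^2=u_j^2
$$
in $R[\varepsilon]^*$. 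By multilinearity of the Milnor symbol, each of the $i$ factors of the form $1+r_j\varepsilon$ contributes a factor of $2$, so
$$
\varphi(g)=2^{\,i}\cdot g.
$$

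Since $i \geqslant 1$, the generators $g$ together with the automorphism $\varphi$ satisfy the hypotheses of Lemma~\ref{lemma:filtr}, which then yields that $TK^M_{n+1}(R)$ is uniquely $2$-divisible. The only delicate point — and therefore the main thing I would double-check — is that Example~\ref{examp:tangspace}(ii) genuinely produces a generating set with at least one ``tangential'' factor per symbol; this is what forces $i \geqslant 1$ and makes the filtration argument in Lemma~\ref{lemma:filtr} applicable. Everything else is a direct verification.
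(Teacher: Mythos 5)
Your proposal is correct and follows essentially the same route as the paper: the authors also use the automorphism of $R[\varepsilon]$ sending $\varepsilon\mapsto 2\varepsilon$, observe via $1+2r\varepsilon=(1+r\varepsilon)^2$ and Example~\ref{examp:tangspace}(ii) that the induced map acts by positive powers of $2$ on the generators of $TK^M_{n+1}(R)$, and conclude by Lemma~\ref{lemma:filtr}. Your ``delicate point'' is indeed guaranteed by Example~\ref{examp:tangspace}(ii), so there is no gap.
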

\begin{proof}
Consider a ring automorphism of $R[\varepsilon]$ that sends $\varepsilon$ to $2\varepsilon$ and is identity on $R$. It induces an automorphism $\varphi\colon K^M_{n+1}\big(R[\varepsilon]\big)\to K^M_{n+1}\big(R[\varepsilon]\big)$. Because of the equality \mbox{$1+2r\varepsilon=(1+r\varepsilon)^2$}, $r\in R$, Example~\ref{examp:tangspace}(ii) implies that $\varphi$ acts as multiplication by positive powers of~$2$ on symbols in $TK_{n+1}^M(R)$. By Lemma~\ref{lemma:filtr}, this proves the proposition.
\end{proof}

Now we prove two lemmas on the Milnor $K$-group $K_2^M\big(R[\varepsilon]\big)$.

\begin{lemma}\label{lemma:epseps}
Let $R$ be any ring.
\begin{itemize}
\item[(i)]
For all elements $a\in R^*$ and $b\in R$ such that $1-a\in R^*$, there is an equality in~$K^M_2\big(R[\varepsilon]\big)$
$$
2\left\{1+\frac{b}{a}\,\varepsilon,1+\frac{b}{1-a}\,\varepsilon\right\}=0\,.
$$
\item[(ii)]
For all elements $r_1,r_2\in R^*$ such that $r_1+r_2\in R^*$, there is an equality in $K_2^M\big(R[\varepsilon]\big)$
$$
2\{1+r_1\varepsilon,1+r_2\,\varepsilon\}=0\,.
$$
\item[(iii)]
Suppose that $\frac{1}{2}\in R$ and $R$ is weakly $4$-fold stable. Then for all elements $r_1,r_2\in R$, there is an equality in $K_2^M\big(R[\varepsilon]\big)$
$$
\{1+r_1\,\varepsilon,1+r_2\,\varepsilon\}=0\,.
$$
\end{itemize}
\end{lemma}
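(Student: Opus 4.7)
The plan is to prove the three parts in order, using (i) to bootstrap (ii) and then (ii) to bootstrap (iii).

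For part (i), I would invoke the Steinberg relation twice. Setting $u = 1 + \tfrac{b}{a}\varepsilon$ and $v = 1 + \tfrac{b}{1-a}\varepsilon$, the identity $(a+b\varepsilon) + (1-a-b\varepsilon) = 1$ gives the Steinberg relation $\{a+b\varepsilon,\, 1-a-b\varepsilon\} = 0$. Factoring $a + b\varepsilon = au$ and $1 - a - b\varepsilon = (1-a)v^{-1}$, expanding by bilinearity and using $\{a, 1-a\} = 0$ yields $\{a, v^{-1}\} + \{u, 1-a\} + \{u, v^{-1}\} = 0$. Replacing $b$ by $-b$ produces the companion relation $\{a, v\} - \{u, 1-a\} - \{u, v\} = 0$. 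Adding the two, the $\{u, 1-a\}$ terms cancel, the remaining $\{a,\cdot\}$ contributions collapse via $\{a, v^{-1}v\} = \{a, 1\} = 0$, and the survivors combine to $-2\{u, v\} = 0$, as desired.

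For part (ii), I would simply specialize (i) by choosing $a = \tfrac{r_2}{r_1+r_2}$ and $b = \tfrac{r_1 r_2}{r_1 + r_2}$. The assumption $r_1, r_2, r_1 + r_2 \in R^*$ guarantees $a, 1-a \in R^*$, and direct computation gives $\tfrac{b}{a} = r_1$, $\tfrac{b}{1-a} = r_2$.

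For part (iii), the key algebraic tool is the bilinear identity $1 + (\alpha+\beta)\varepsilon = (1 + \alpha\varepsilon)(1 + \beta\varepsilon)$ in $R[\varepsilon]$. First I would use weak $2$-fold stability (implied by weak $4$-fold) to pick $s_0 \in R^*$ with $r_1 - s_0 \in R^*$, and write
$$
\{1 + r_1\varepsilon,\, 1 + r_2\varepsilon\} = \{1 + s_0\varepsilon,\, 1 + r_2\varepsilon\} + \{1 + (r_1-s_0)\varepsilon,\, 1 + r_2\varepsilon\},
$$
reducing to the case where the first argument is $1 + r'\varepsilon$ with $r' \in R^*$. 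Then, for each such term, weak $4$-fold stability (three auxiliary conditions) lets me choose $t_0 \in R^*$ with $r_2 - t_0$, $r' + t_0$, and $r' + r_2 - t_0$ all invertible; splitting $1 + r_2\varepsilon = (1 + (r_2-t_0)\varepsilon)(1 + t_0\varepsilon)$ and applying part (ii) to each of the two resulting symbols gives $2\{1 + r'\varepsilon,\, 1 + r_2\varepsilon\} = 0$. Combining, $2\{1 + r_1\varepsilon,\, 1 + r_2\varepsilon\} = 0$. Since this symbol lies in $TK^M_2(R)$ and $\tfrac{1}{2} \in R$, Proposition~\ref{prop:divis} (unique $2$-divisibility of $TK^M_2(R)$) promotes the vanishing of twice the symbol to the vanishing of the symbol itself.

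The main obstacle is the bookkeeping in (iii) that keeps the hypothesis at weak $4$-fold (rather than weak $5$-fold) stability: it is crucial to split $r_1$ first using only one auxiliary invertibility, so that once we are in the case $r' \in R^*$ only three additional conditions on $t_0$ remain when splitting $r_2$; doing them simultaneously would require four auxiliary conditions and hence weak $5$-fold stability.
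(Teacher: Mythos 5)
Your proposal is correct and follows essentially the same route as the paper: parts (i) and (ii) reproduce the paper's argument exactly (your substitution $b\mapsto -b$ is the paper's automorphism $\varepsilon\mapsto-\varepsilon$), and part (iii) likewise reduces to part (ii) via weak $4$-fold stability and then invokes the unique $2$-divisibility of Proposition~\ref{prop:divis}. The only cosmetic difference is in (iii): the paper translates both arguments by a single auxiliary unit $r$ chosen so that $r_1+r$, $r_2+r$ and $\tfrac{r_1+r_2}{2}+r$ are invertible and expands $2\{1+(r_1+r)\varepsilon,\,1+(r_2+r)\varepsilon\}=0$, whereas you additively split one argument; both are equally valid uses of the same stability hypothesis.
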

\begin{proof}
$(i)$ We have the Steinberg relation in $K^M_2\big(R[\varepsilon]\big)$
\begin{equation}\label{eq:0miln}
\{a+b\,\varepsilon,1-a-b\,\varepsilon\}=0\,.
\end{equation}
Note that
\begin{equation}\label{eq:trivial}
a+b\,\varepsilon=a\left(1+\frac{b}{a}\,\varepsilon\right)\,,\qquad
1-a-b\,\varepsilon=(1-a)\left(1-\frac{b}{1-a}\,\varepsilon\right)\,.
\end{equation}
Using multilinearity and the Steinberg relation $\{a,1-a\}=0$ in $K^M_2(R)$, we see that~\eqref{eq:0miln} and~\eqref{eq:trivial} imply the equality
\begin{equation}\label{eq:1miln}
\left\{a,1-\frac{b}{1-a}\,\varepsilon\right\}+\left\{1+\frac{b}{a}\,\varepsilon,1-a\right\}+
\left\{1+\frac{b}{a}\,\varepsilon,1-\frac{b}{1-a}\,\varepsilon\right\}=0\,.
\end{equation}
Applying the automorphism of $R[\varepsilon]$ that sends $\varepsilon$ to $-\varepsilon$ and is identity on $R$, we get the equality
\begin{equation}\label{eq:2miln}
\left\{a,1+\frac{b}{1-a}\,\varepsilon\right\}+\left\{1-\frac{b}{a}\,\varepsilon,1-a\right\}+
\left\{1-\frac{b}{a}\,\varepsilon,1+\frac{b}{1-a}\,\varepsilon\right\}=0\,.
\end{equation}
Since $(1+r\varepsilon)^{-1}=1-r\varepsilon$ for any $r\in R$, the sum of~\eqref{eq:1miln} and~\eqref{eq:2miln} gives
$$
2\left\{1+\frac{b}{a}\,\varepsilon,1-\frac{b}{1-a}\,\varepsilon\right\}=0\,.
$$
Taking the inverse element in the group $K_2\big(R[\varepsilon]\big)$, we obtain item~$(i)$.

$(ii)$ Apply item~$(i)$ with
$$
a=\frac{r_2}{r_1+r_2}\,,\qquad b=\frac{r_1r_2}{r_1+r_2}\,.
$$

$(iii)$ Since $(1+s_2\,\varepsilon)\cdot(1+s_2\,\varepsilon)=1+(s_1+s_2)\,\varepsilon$ for all $s_1,s_2\in R$, Example~\ref{exam:wstable}(i) implies that we may assume $r_1,r_2\in R^*$. Since $R$ is weakly $4$-fold stable, there exists $r\in R^*$ such that
$$
r_1+r,\,r_2+r,\,\frac{r_1+r_2}{2}+r\in R^*\,.
$$
In particular, we have that $r_1+r_2+2r\in R^*$ and $2r\in R^*$. It follows from item~$(ii)$ that there are equalities
$$
0=2\{1+(r_1+r)\varepsilon,1+(r_2+r)\varepsilon\}=2\{1+r_1\,\varepsilon,1+r_2\,\varepsilon\}+
2\{1+r\,\varepsilon,1+r_2\,\varepsilon\}+
$$
$$
+2\{1+r_1\,\varepsilon,1+r\,\varepsilon\}+2\{1+r\,\varepsilon,1+r\,\varepsilon\}=
2\{1+r_1\,\varepsilon,1+r_2\,\varepsilon\}\,.
$$
Together with Proposition~\ref{prop:divis}, this proves item~$(iii)$.
\end{proof}

\begin{lemma}\label{lemma:cool}
Let $R$ be a ring such that $\frac{1}{2}\in R$ and $R$ is weakly $4$-fold stable. Let $r_1,\ldots,r_N\in R^*$, $N\geqslant 2$, be such that $r_1+\ldots+r_N=0$. Then there is an equality in~$K^M_2\big(R[\varepsilon]\big)$
$$
\{1+r_1\,\varepsilon,r_1\}+\ldots+\{1+r_N\,\varepsilon,r_N\}=0\,.
$$
\end{lemma}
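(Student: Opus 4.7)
The plan is to prove the statement by induction on $N$, using an auxiliary collapsing identity as the engine. This identity says that for $r_1,r_2\in R^*$ with $r_1+r_2\in R^*$, one has
$$\{1+r_1\varepsilon,r_1\}+\{1+r_2\varepsilon,r_2\}=\{1+(r_1+r_2)\varepsilon,r_1+r_2\}$$
in $K^M_2(R[\varepsilon])$. To prove it I set $u=r_1/(r_1+r_2)\in R^*$ (so $1-u=r_2/(r_1+r_2)\in R^*$) and apply the Steinberg relation to $x=u(1+r_2\varepsilon)$, noting that $1-x=(1-u)\bigl(1-\tfrac{ur_2}{1-u}\varepsilon\bigr)$. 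Expanding bi-multiplicatively produces four symbols: $\{u,1-u\}$ vanishes by Steinberg in $K^M_2(R)$, and the symbol whose both entries lie in $1+R\varepsilon$ vanishes by Lemma~\ref{lemma:epseps}(iii). Anti-symmetry together with $(1+s\varepsilon)^{-1}=1-s\varepsilon$, and the numerical identity $ur_2/(1-u)=r_1$, reduce the remaining two terms to $\{1+r_1\varepsilon,u\}+\{1+r_2\varepsilon,1-u\}=0$. Finally, bi-multiplicativity applied to $r_1=u(r_1+r_2)$ and $r_2=(1-u)(r_1+r_2)$ converts this into the collapsing identity.

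For the base case $N=2$ write $r_2=-r_1$. Bi-multiplicativity in the second slot and $(1+r_1\varepsilon)(1-r_1\varepsilon)=1$ reduce the left-hand side to $\{1-r_1\varepsilon,-1\}$. Since $(-1)^2=1$, one has $2\{1-r_1\varepsilon,-1\}=\{1-r_1\varepsilon,1\}=0$, and Proposition~\ref{prop:divis} (using $\frac12\in R$) then yields $\{1-r_1\varepsilon,-1\}=0$.

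For the inductive step I treat $N=3$ and $N\geqslant 4$ separately. When $N=3$, the equality $r_1+r_2=-r_3\in R^*$ lets the collapsing identity reduce the claim to the $N=2$ case applied to the pair $(-r_3,r_3)$. When $N\geqslant 4$, I use weak $4$-fold stability to choose $t\in R^*$ such that each of the three elements $r_1+t$, $r_1+r_2+t$ and $r_3-t$ is invertible. Since $\{1+t\varepsilon,t\}+\{1-t\varepsilon,-t\}=0$ by the $N=2$ case, padding the sequence by $(t,-t)$ does not change the left-hand side; three successive applications of the collapsing identity -- combining first $r_1$ with $t$, then $r_1+t$ with $r_2$, and finally $r_3$ with $-t$ -- shrink the length-$(N+2)$ padded sequence to the length-$(N-1)$ sequence $(r_1+r_2+t,\ r_3-t,\ r_4,\ldots,r_N)$, whose entries remain invertible with vanishing sum. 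The inductive hypothesis then closes the argument.

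The main obstacle is the collapsing identity itself: it is not a formal consequence of bilinearity but must be extracted by a delicate manipulation of the Steinberg relation, and it is in this step that the hypotheses $\frac12\in R$ and weak $4$-fold stability (via Lemma~\ref{lemma:epseps}(iii)) genuinely intervene. A secondary worry -- that the padding and three-step combine in the inductive step might force stability beyond weak $4$-fold -- is dispelled by noting that exactly three shift conditions on $t$ are required, which is precisely what weak $4$-fold stability provides.
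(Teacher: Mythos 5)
Your overall architecture --- induction on $N$, the base case via $\{1-r_1\varepsilon,-1\}$ and $2$-divisibility, a collapsing identity $\{1+r_1\varepsilon,r_1\}+\{1+r_2\varepsilon,r_2\}=\{1+(r_1+r_2)\varepsilon,r_1+r_2\}$ for $N=3$, and a padding trick requiring exactly three invertibility conditions for $N\geqslant 4$ --- is precisely the paper's, and those parts check out. The genuine gap is inside your proof of the collapsing identity. From the Steinberg relation for $x=u(1+r_2\varepsilon)$ you correctly reach $\{u,1-r_1\varepsilon\}+\{1+r_2\varepsilon,1-u\}=0$, but to turn $\{u,1-r_1\varepsilon\}=-\{u,1+r_1\varepsilon\}$ into $\{1+r_1\varepsilon,u\}$ you invoke ``anti-symmetry'' of the symbol in $K_2^M\big(R[\varepsilon]\big)$. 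In this paper anti-symmetry (Lemma~\ref{lemma:Morrow}) is established only for weakly $5$-fold stable rings, whereas Lemma~\ref{lemma:cool} assumes only weak $4$-fold stability, so $R[\varepsilon]$ is only known to be weakly $4$-fold stable and that lemma may not be quoted. As written, this single step is unjustified under the stated hypotheses.

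The gap is repairable for the particular pair you need, because $u=r_1/(r_1+r_2)$ satisfies $1-u\in R^*$. The bilinearity identity $\{x,y\}+\{y,x\}=\{xy,-xy\}-\{x,-x\}-\{y,-y\}$ gives
$$
\{1+r_1\varepsilon,u\}+\{u,1+r_1\varepsilon\}=\{u(1+r_1\varepsilon),-u(1+r_1\varepsilon)\}-\{u,-u\}-\{1+r_1\varepsilon,-(1+r_1\varepsilon)\}\,.
$$
The first two terms on the right vanish by the standard computation $\{x,-x\}=\{x,1-x\}-\{x,1-x^{-1}\}=0$, which is valid here because $1-u$ and $1-u(1+r_1\varepsilon)$ are invertible; the last term equals $\{1+r_1\varepsilon,-1\}+\{1+r_1\varepsilon,1+r_1\varepsilon\}$, which vanishes by Proposition~\ref{prop:divis} and Lemma~\ref{lemma:epseps}(iii). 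With this supplement your argument closes. For comparison, the paper's own proof of the $N=3$ step avoids anti-symmetry altogether: it reduces, via Lemma~\ref{lemma:epseps}(iii), to the Suslin--Yarosh identity and collapses each of the two resulting symbols to $\big\{-r_2/r_1,\,1-\tfrac{r_1r_2}{r_1+r_2}\varepsilon\big\}$ by two applications of the Steinberg relation, after which the two terms cancel because this expression changes sign under $r_1\leftrightarrow r_2$.
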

\begin{proof}
We use induction on $N$. To prove the lemma for the case $N=2$, observe that
$$
\{1+r_1\,\varepsilon,r_1\}+\{1+r_2\,\varepsilon,r_2\}=
\{1+r_1\,\varepsilon,r_1\}+\{1-r_1\,\varepsilon,-r_1\}=
$$
$$
=\{1+r_1\,\varepsilon,r_1\}-\{1+r_1\,\varepsilon,-r_1\}=
\{1+r_1\,\varepsilon,-1\}
$$
and use Proposition~\ref{prop:divis}.

To prove the lemma for the case $N=3$, by the case $N=2$, it is enough to show the equality
$$
\{1+(r_1+r_2)\,\varepsilon,r_1+r_2\}-\{1+r_1\,\varepsilon,r_1\}-\{1+r_2\,\varepsilon,r_2\}=0\,.
$$
This is equivalent to the equality
\begin{equation}\label{eq:SY}
\{1+(r_1+r_2)\,\varepsilon,r_1+r_2-r_1r_2\,\varepsilon\}-
\{1+r_1\,\varepsilon,r_1\}-\{1+r_2\,\varepsilon,r_2\}=0\,,
\end{equation}
because the symbol $\big\{1+(r_1+r_2)\,\varepsilon,1-\frac{r_1r_2}{r_1+r_2}\,\varepsilon\big\}$ equals zero by Lemma~\ref{lemma:epseps}$(iii)$. Formula~\eqref{eq:SY} is essentially proved by Suslin and Yarosh in~\cite[Lem.\,3.5]{SY} (see also~\cite[Sublem.\,3.3]{Ker}). Namely, by multilinearity, the left hand side in~\eqref{eq:SY} is equal to
\begin{equation}\label{eq:SY2}
\left\{1+r_1\,\varepsilon,1+\frac{r_2}{r_1}-r_2\,\varepsilon\right\}+
\left\{1+r_2\,\varepsilon,1+\frac{r_1}{r_2}-r_1\,\varepsilon\right\}\,.
\end{equation}
Applying the Steinberg relation twice, we obtain the equalities
$$
\left\{1+r_1\,\varepsilon,1+\frac{r_2}{r_1}-r_2\,\varepsilon\right\}=
\left\{(1+r_1\,\varepsilon)\left(-\frac{r_2}{r_1}+r_2\,\varepsilon\right),1
+\frac{r_2}{r_1}-r_2\,\varepsilon\right\}=
$$
$$
=\left\{-\frac{r_2}{r_1},1+\frac{r_2}{r_1}-r_2\,\varepsilon\right\}=
\left\{-\frac{r_2}{r_1},\left(1+\frac{r_2}{r_1}\right)
\left(1-\frac{r_1r_2}{r_1+r_2}\,\varepsilon\right)\right\}=
$$
$$
=\left\{-\frac{r_2}{r_1},1-\frac{r_1r_2}{r_1+r_2}\,\varepsilon\right\}\,.
$$
Since the last expression is antisymmetric with respect to $r_1$ and $r_2$, we obtain that~\eqref{eq:SY2} equals zero. This  proves the case $N=3$.

Now let us make the induction step for $N\geqslant 4$. For short, put $\langle s\rangle:=\{1+s\,\varepsilon,s\}$, where $s\in R^*$. The case $N=2$ asserts that $\langle-s\rangle=-\langle s\rangle$ for any $s\in R^*$. The case $N=3$ asserts that $\langle s_1+s_2\rangle=\langle s_1\rangle+\langle s_2\rangle$ for all $s_1,s_2\in R^*$ such that $s_1+s_2\in R^*$. Since $R$ is weakly $4$-fold stable, there is $r\in R^*$ such that
$$
r+r_1,\,r+r_1+r_2,\,r+r_1+r_2+r_3\in R^*\,.
$$
We obtain the equalities
$$
\langle r_1\rangle+\ldots+\langle r_N\rangle=-\langle r\rangle+\langle r\rangle+\langle r_1\rangle+\ldots+\langle r_N\rangle=
$$
$$
=\langle -r\rangle+\langle r+r_1\rangle+\langle r_2\rangle+\ldots+\langle r_N\rangle=
\langle -r\rangle+\langle r+r_1+r_2\rangle+\langle r_3\rangle+\ldots+\langle r_N\rangle=
$$
\begin{equation}\label{SY3}
=\langle -r\rangle+\langle r+r_1+r_2+r_3\rangle+\langle r_4\rangle+\ldots+\langle r_N\rangle\,.
\end{equation}
Expression~\eqref{SY3} contains $N-1$ summands and all elements in angle parenthesis are from~$R^*$. Besides,
we have
$$
(-r)+(r+r_1+r_2+r_3)+r_4+\ldots+r_N=0\,.
$$
Therefore,
by the induction hypothesis, we obtain that~\eqref{SY3} equals zero. This  proves the lemma.
\end{proof}

Finally, recall the following result, which is proved in~\cite[Lem.\,3.6]{Mor} with a method of Nesterenko and Suslin from~\cite[Lem.\,3.2]{NS} (see also Lemma~2.2 from the paper  of Kerz~\cite{Ker}).

\begin{lemma}\label{lemma:Morrow}
Let $R$ be a weakly $5$-fold stable ring. Then for all elements $r,s\in R^*$, there are equalities in $K_2^M(R)$ $$
\{r,-r\}=0\,,\qquad \{r,s\}=-\{s,r\}\,.
$$
\end{lemma}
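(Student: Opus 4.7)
The plan is to follow the strategy of Nesterenko--Suslin~\cite{NS}, as refined by Morrow~\cite{Mor}: both identities will be reduced to a single key fact, namely $\{r,-r\}=0$ for every $r\in R^*$, which I would prove first under an auxiliary invertibility hypothesis and then in full generality using weak $5$-fold stability.

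\emph{Step 1: reducing anti-symmetry to $\{r,-r\}=0$.} First I would record the purely formal bilinearity identity
\begin{equation*}
\{ab,-ab\}\;=\;\{a,-a\}+\{b,-b\}+\{a,b\}+\{b,a\}\quad\text{for all }a,b\in R^*,
\end{equation*}
which is obtained by writing $-ab$ both as $(-a)b$ and as $(-b)a$ and expanding $\{ab,-ab\}$ via bilinearity in each slot. Once $\{x,-x\}=0$ is known for every $x\in R^*$, the choice $x=ab$ in this identity immediately forces $\{a,b\}+\{b,a\}=0$.

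\emph{Step 2: $\{r,-r\}=0$ under the hypothesis $1-r,\,1-r^{-1}\in R^*$.} Next I would exploit the factorization
\begin{equation*}
-r\;=\;(1-r)\bigl(1-r^{-1}\bigr)^{-1}.
\end{equation*}
Bilinearity in the second slot then gives $\{r,-r\}=\{r,1-r\}-\{r,1-r^{-1}\}$. The first term vanishes by the Steinberg relation. For the second I would use the consequence $\{r,x\}=-\{r^{-1},x\}$ of $\{1,x\}=0$ to rewrite $\{r,1-r^{-1}\}=-\{r^{-1},1-r^{-1}\}$, which is again zero by Steinberg applied with $r^{-1}$ in place of $r$. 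Hence $\{r,-r\}=0$ in this special case.

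\emph{Step 3: removing the auxiliary hypothesis.} For a general $r\in R^*$, neither $1-r$ nor $1-r^{-1}$ need be invertible. Here I would invoke weak $5$-fold stability to produce an auxiliary $c\in R^*$ whose shifts avoid a short prescribed list of bad values; with such $c$ in hand, one expands $\{r,-r\}$ via the factorization $r=c\cdot(c^{-1}r)$ and the identity of Step~1 into symbols of the form $\{x,-x\}$ with $x$ ranging over $c$, $c^{-1}r$, and the remaining auxiliary factors, each of which satisfies the hypothesis of Step~2. Collecting the vanishing contributions yields $\{r,-r\}=0$, at which point Step~1 delivers anti-symmetry. The formal manipulations of Steps~1 and~2 are routine; the genuine difficulty lies in Step~3, where one must keep the list of auxiliary invertibilities short enough that weak $5$-fold stability (as opposed to van der Kallen's stronger notion of $5$-fold stability) truly suffices. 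Identifying the correct four shifts to make invertible is precisely the content of~\cite[Lem.\,3.6]{Mor}, and once this choice is made the rest of the argument is mechanical.
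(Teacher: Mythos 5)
The paper does not actually prove this lemma: it quotes it from Morrow~\cite[Lem.\,3.6]{Mor} (whose proof follows Nesterenko--Suslin~\cite{NS}) and only records the reduction of the anti-symmetry to the identity $\{r,-r\}=0$ via $\{r,s\}+\{s,r\}=\{rs,-rs\}-\{r,-r\}-\{s,-s\}$. Your Step~1 is exactly that reduction, and your Step~2 (the factorization $-r=(1-r)(1-r^{-1})^{-1}$ plus two applications of the Steinberg relation) is the correct and standard special case. So up to that point you are reproducing what the paper and its reference actually do.

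Step~3, however, contains a genuine gap, and as written it is circular. The product identity of Step~1 gives
$$
\{r,-r\}=\{c,-c\}+\{c^{-1}r,-c^{-1}r\}+\{c,c^{-1}r\}+\{c^{-1}r,c\}\,,
$$
so even after choosing $c$ so that both $c$ and $c^{-1}r$ satisfy the hypothesis of Step~2, you are \emph{not} left only with terms of the form $\{x,-x\}$: the symmetrized cross term $\{c,c^{-1}r\}+\{c^{-1}r,c\}$ survives, and its vanishing is precisely the anti-symmetry you deferred to Step~1 --- which in turn was conditional on $\{r,-r\}=0$ for all $r$. The missing ingredient is an intermediate special case of anti-symmetry: for $x,y\in R^*$ with $1-x$, $1-y$ and $1-xy$ all invertible, Step~2 applies to $x$, $y$ and $xy$ separately, and then the Step~1 identity forces $\{x,y\}+\{y,x\}=0$. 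The general cross term must then be reduced to such special pairs by a further factorization (this is where the count of invertibility conditions, hence weak $5$-fold stability, enters). Without this intermediate step your outline does not close, so the appeal to "collecting the vanishing contributions" is not justified.
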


Here, the second equality follows in a standard way from the first one and the identity
$$
\{r,s\}+\{s,r\}=\{rs,-rs\}-\{r,-r\}-\{s,-s\}\,.
$$

\subsection{Proof of Theorem~\ref{thm:tangentMilnor}}

If $R$ is a weakly $2$-fold stable ring, then by Example~\ref{exam:wstable}(i), the group $\Omega^n_R$ is generated by differential forms~${sdr_1\wedge\ldots\wedge dr_n}$, where \mbox{$r_1,\ldots,r_n\in R^*$}, $s\in R$. The next lemma is similar to~\cite[Lem.\,1.8]{Blo} and the proof is straightforward.

\begin{lemma}\label{lemma:dull}
Let $R$ be a weakly $2$-fold stable ring. Let $M$ be an $R$-module and let
$f\colon (R^*)^{\times n}\to M$ be a map of sets such that
\begin{itemize}
\item[(i)]
$f$ is alternating: for all $r_1,\ldots,r_n\in R^*$ and $1\leqslant i\leqslant n-1$, we have
$$
f(r_1,\ldots,r_i,r_{i+1},\ldots,r_n)=-f(r_1,\ldots,r_{i+1},r_i,\ldots,r_n)
$$
and if $r_i=r_{i+1}$, then ${f(r_1,\ldots,r_i,r_{i+1},\ldots,r_n)=0}$;
\item[(ii)]
$f$ satisfies the Leibniz rule: for all $r_1,r_1',r_2,\ldots,r_n\in R^*$, we have
$$
f(r_1r_1',r_2,\ldots,r_n)=r_1f(r_1',r_2,\ldots,r_n)+r'_1f(r_1,r_2,\ldots,r_n)\,;
$$
\item[(iii)]
$f$ is additively multilinear: for all $r_{11},\ldots,r_{1N},r_2\ldots,r_n\in R^*$ (where $N\geqslant 2$) such that $r_{11}+\ldots+r_{1N}=0$, we have
$$
f(r_{11},r_2,\ldots,r_n)+\ldots+f(r_{1N},r_2,\ldots,r_n)=0\,.
$$
\end{itemize}
Then there is a unique homomorphism of $R$-modules
$F\colon \Omega^n_R\to M$
such that for all \mbox{$r_1,\ldots,r_n\in R^*$} and $s\in R$, we have that
$F(sdr_1\wedge\ldots \wedge dr_n)=sf(r_1,\ldots,r_n)$.
\end{lemma}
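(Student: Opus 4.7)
\textbf{Proof plan for Lemma~\ref{lemma:dull}.} The plan is to reduce the statement to the universal property of K\"ahler differentials. Uniqueness is straightforward: by Example~\ref{exam:wstable}(i), any $r \in R$ is a sum of two invertible elements, so expanding each $d s$ via the additivity of $d$ shows that $\Omega^n_R$ is generated as an abelian group by the forms $s\, dr_1 \wedge \ldots \wedge dr_n$ with $r_i \in R^*$ and $s \in R$. Hence $F$ is determined by its prescribed values on these generators.

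For existence, I would extend $f$ to a map $\tilde f \colon R^n \to M$ that is $\Z$-multilinear, alternating, and satisfies the Leibniz rule in each argument for arbitrary elements of $R$. Such a $\tilde f$ provides the desired $F$ via the universal property of $\Omega^n_R$: namely, $R$-linear maps $\Omega^n_R \to M$ correspond bijectively to alternating $\Z$-multilinear maps $D \colon R^n \to M$ that are derivations in each argument, via $F(dr_1 \wedge \ldots \wedge dr_n) = D(r_1, \ldots, r_n)$ and $R$-linearity in the leading coefficient. Two preliminary observations about $f$, used in what follows, are immediate from the Leibniz rule (ii): applying it to $1 \cdot 1 = 1$ and $(-1)(-1) = 1$ gives $f(1, r_2, \ldots) = f(-1, r_2, \ldots) = 0$, and then (ii) again yields the sign identity $f(-x, r_2, \ldots) = -f(x, r_2, \ldots)$ for $x \in R^*$.

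The extension $\tilde f$ is built one argument at a time. To extend in the first argument, choose for each $r_1 \in R$ a decomposition $r_1 = a + b$ with $a, b \in R^*$, which exists by weak $2$-fold stability, and set $\tilde f(r_1, r_2, \ldots, r_n) = f(a, r_2, \ldots) + f(b, r_2, \ldots)$. Independence of the decomposition, say $r_1 = a + b = a' + b'$, follows by combining the sign identity above with property (iii) applied to the relation $a + b + (-a') + (-b') = 0$. Repeating this construction in each of the remaining arguments produces $\tilde f$ on all of $R^n$.

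The main obstacle is then to verify that $\tilde f$ inherits all three properties on the whole of $R^n$. Additivity in each argument holds by construction together with (iii). Alternation extends from (i) by decomposing each non-unit argument as a sum of units: a transposition of arguments reduces to transpositions among the units, and equal arguments reduce, after expansion, to diagonal terms vanishing by (i) plus off-diagonal terms that cancel in pairs by antisymmetry. The delicate point is the Leibniz rule: for $r_1, r_1' \in R$, write $r_1 = a + b$ and $r_1' = a' + b'$ with $a, b, a', b' \in R^*$, so $r_1 r_1' = aa' + ab' + ba' + bb'$ with each summand in $R^*$; applying (ii) termwise and rearranging gives the desired identity $\tilde f(r_1 r_1', r_2, \ldots) = r_1 \tilde f(r_1', r_2, \ldots) + r_1' \tilde f(r_1, r_2, \ldots)$. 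With these properties of $\tilde f$ secured, the universal property finishes the proof.
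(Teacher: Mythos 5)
Your argument is correct and is essentially the proof the paper has in mind (the paper omits it, saying only that it is straightforward and similar to Lemma~1.8 of~\cite{Blo}): extend $f$ additively from $(R^*)^{\times n}$ to $R^{\times n}$ using weak $2$-fold stability, check that the extension is an alternating multiderivation, and invoke the universal property of $\Omega^n_R$. One small slip: the identity $(-1)(-1)=1$ together with hypothesis (ii) only yields $2f(-1,r_2,\ldots,r_n)=0$, not $f(-1,r_2,\ldots,r_n)=0$; however, the sign identity $f(-x,r_2,\ldots,r_n)=-f(x,r_2,\ldots,r_n)$ is exactly hypothesis (iii) with $N=2$, which then gives $f(-1,r_2,\ldots,r_n)=-f(1,r_2,\ldots,r_n)=0$, so nothing is lost. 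You should also record explicitly that the extension $\tilde f$ agrees with $f$ on units --- for $r_1=a+b$ with $r_1,a,b\in R^*$ this is hypothesis (iii) with $N=3$ applied to $a+b+(-r_1)=0$ --- since the final formula $F(s\,dr_1\wedge\ldots\wedge dr_n)=sf(r_1,\ldots,r_n)$ requires it.
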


Note that it is important to require in Lemma~\ref{lemma:dull}$(iii)$ that $N\geqslant 2$ is an arbitrary natural number, because the sum of invertible elements of a ring is not necessarily an invertible element. However for a weakly $4$-fold stable ring $R$, the case of an arbitrary number $N$ can be reduced to the cases $N=2,3$, which can be shown by the method from the end of the proof of Lemma~\ref{lemma:cool}.

\medskip

In order to apply Lemma~\ref{lemma:dull}, we define an $R$-module structure on the group~$TK^M_{n+1}(R)$ (cf.~\cite[Lem.\,1.4,\,1.5]{Blo}).

\begin{prop}\label{corol:milnor}
Let $R$ be a weakly $5$-fold stable ring such that $\frac{1}{2}\in R$. Then the following is true:
\begin{itemize}
\item[(i)]
The group $TK_{n+1}^M(R)$ is generated by symbols of type $\{1+sr_1\ldots r_n\,\varepsilon,r_1,\ldots,r_n\}$, where  \mbox{$r_1,\ldots,r_n\in R^*$}, $s\in R$.
\item[(ii)]
The action of $R$ on the ring $R[\varepsilon]$ by endomorphisms that send $\varepsilon$ to $a\,\varepsilon$, where $a\in R$, and are identity on $R$ defines an $R$-module structure on $TK^M_{n+1}(R)$ such that \begin{equation}\label{eq:action}
a\;:\;\{1+sr_1\ldots r_n\,\varepsilon,r_1,\ldots,r_n\}\longmapsto
\{1+asr_1\ldots r_n\,\varepsilon,r_1,\ldots,r_n\}\,.
\end{equation}
\end{itemize}
\end{prop}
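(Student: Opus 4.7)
My plan is to first establish part~(i) by reducing the generators from Example~\ref{examp:tangspace}(ii), using adjacent antisymmetry together with Lemma~\ref{lemma:epseps}(iii), and then to define the module structure in part~(ii) via functoriality of $K^M_{n+1}$ applied to the ring endomorphism $\varepsilon\mapsto a\varepsilon$. The only non-formal module axiom will be additivity in $a$, which reduces to the multiplicativity of symbols on the explicit generators from part~(i).

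For part~(i), I start from Example~\ref{examp:tangspace}(ii): every element of $TK^M_{n+1}(R)$ is a sum of symbols $\{u_1,\ldots,u_{n+1}\}$ in which each $u_i$ lies either in $R^*$ or in $1+R\,\varepsilon$, with at least one $u_i$ of the latter form. I first observe that $R[\varepsilon]$ inherits weak $5$-fold stability from $R$: given $a_1+b_1\varepsilon,\ldots,a_4+b_4\varepsilon$ in $R[\varepsilon]$, choose $r\in R^*$ so that all $a_i+r\in R^*$, and note that each $(a_i+r)+b_i\varepsilon$ is then invertible in $R[\varepsilon]$. Consequently, Lemma~\ref{lemma:Morrow} applied to $K^M_2(R[\varepsilon])$ yields adjacent antisymmetry in $K^M_{n+1}(R[\varepsilon])$ by the usual argument. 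Combining this with Lemma~\ref{lemma:epseps}(iii) (which shows $\{1+r_1\varepsilon,1+r_2\varepsilon\}=0$ in $K^M_2(R[\varepsilon])$), any symbol with at least two entries in $1+R\,\varepsilon$ vanishes: bring two such entries adjacent via antisymmetry and they kill the symbol via the $K^M_2$-relation. So we may assume exactly one $u_i$ lies in $1+R\,\varepsilon$ and, after a further swap, place it first, obtaining $\{1+s\varepsilon,r_1,\ldots,r_n\}$ with $s\in R$ and $r_i\in R^*$. Finally, since $r_1\ldots r_n\in R^*$, setting $s':=s(r_1\ldots r_n)^{-1}\in R$ rewrites this as $\{1+s'r_1\ldots r_n\,\varepsilon,r_1,\ldots,r_n\}$, as required.

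For part~(ii), the ring endomorphism $\varphi_a\colon R[\varepsilon]\to R[\varepsilon]$ sending $\varepsilon$ to $a\varepsilon$ and fixing $R$ induces by functoriality a group endomorphism $(\varphi_a)_*$ of $K^M_{n+1}(R[\varepsilon])$. Since $\varphi_a$ commutes with the projection $\varepsilon\mapsto 0$, the endomorphism $(\varphi_a)_*$ preserves $TK^M_{n+1}(R)$. Define $a\cdot x:=(\varphi_a)_*(x)$. On a generator $\{1+sr_1\ldots r_n\,\varepsilon,r_1,\ldots,r_n\}$ supplied by part~(i), the map $\varphi_a$ fixes the $r_i$ and sends the first entry to $1+asr_1\ldots r_n\,\varepsilon$, yielding formula~\eqref{eq:action}. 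The axioms $1\cdot x=x$, $(ab)\cdot x=a\cdot(b\cdot x)$, and $a\cdot(x+y)=a\cdot x+a\cdot y$ are automatic from $\varphi_1=\Id$, $\varphi_{ab}=\varphi_a\circ\varphi_b$, and $(\varphi_a)_*$ being a group homomorphism. The distributivity $(a+b)\cdot x=a\cdot x+b\cdot x$ is verified on the generators of part~(i): using $\varepsilon^2=0$, we have $(1+asr_1\ldots r_n\,\varepsilon)(1+bsr_1\ldots r_n\,\varepsilon)=1+(a+b)sr_1\ldots r_n\,\varepsilon$, and multilinearity of symbols in the first argument concludes.

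The main technical hurdle is the reduction in part~(i): one must confirm that $R[\varepsilon]$ is itself weakly $5$-fold stable so that Lemma~\ref{lemma:Morrow} supplies the adjacent antisymmetry in $K^M_{n+1}(R[\varepsilon])$ needed to bring two $1+R\,\varepsilon$ factors adjacent and then annihilate them via Lemma~\ref{lemma:epseps}(iii). Once this is in hand, the remainder of both parts is essentially a bookkeeping exercise on symbols.
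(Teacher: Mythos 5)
Your proposal is correct and follows essentially the same route as the paper: both establish weak $5$-fold stability of $R[\varepsilon]$ so that Lemma~\ref{lemma:Morrow} gives adjacent antisymmetry, combine this with Example~\ref{examp:tangspace}(ii) and Lemma~\ref{lemma:epseps}(iii) to reduce to a single factor in $1+R\,\varepsilon$, and reduce the only non-formal module axiom (distributivity in $a$) to multilinearity on the generators from part~(i). You merely spell out details the paper leaves implicit, such as the explicit verification that $R[\varepsilon]$ inherits weak stability.
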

\begin{proof}
$(i)$ It is easy to see that if $R$ is weakly $k$-fold stable, then so is the ring~$R[\varepsilon]$. Hence Lemma~\ref{lemma:Morrow} holds for the weakly $5$-fold stable ring~$R[\varepsilon]$. Now combine this with Example~\ref{examp:tangspace}(ii) and Lemma~\ref{lemma:epseps}$(iii)$.

$(ii)$ The only non-trivial statement is distributivity with respect to elements from $R$, that is, the equality $(a+b)v=av+bv$ for all $a,b\in R$, $v\in TK^M_{n+1}(R)$. This follows from item~$(i)$ and formula~\eqref{eq:action}.
\end{proof}

Formula~\eqref{eq:action} implies that the map $B$ (see Definition~\ref{defin:B}) is a homomorphism of \mbox{$R$-modules}.

\medskip

Now we are ready to prove the main result of the paper.

\begin{proof}[Proof of Theorem~\ref{thm:tangentMilnor}]
Following the strategy from~\cite{Blo}, we construct the inverse to the map $B$. By Proposition~\ref{corol:milnor}$(ii)$, the group $TK_{n+1}^M(R)$ is an $R$-module. Let us apply Lemma~\ref{lemma:dull} to the map
$$
f\;:\;(R^*)^{\times n}\longrightarrow TK_{n+1}^M(R)\,,\qquad (r_1,\ldots,r_n)\longmapsto \{1+r_1\ldots r_n\,\varepsilon,r_1,\ldots,r_n\}\,.
$$
For this, we need to check that $f$ satisfies all conditions from Lemma~\ref{lemma:dull}. We use notation of this lemma. The map $f$ is alternating by Proposition~\ref{prop:divis} and Lemma~\ref{lemma:Morrow}. The Leibniz rule for $f$ follows from the equalities (see also formula~\eqref{eq:action})
$$
\{1+r_1r'_1r_2\ldots r_n\,\varepsilon,r_1r'_1,r_2,\ldots,r_n\}=
$$
$$
=\{1+r_1r'_1r_2\ldots r_n\,\varepsilon,r'_1,r_2,\ldots,r_n\}+
\{1+r_1r'_1r_2\ldots r_n\,\varepsilon,r_1,r_2,\ldots,r_n\}=
$$
$$
=r_1\{1+r'_1r_2\ldots r_n\,\varepsilon,r'_1,r_2,\ldots,r_n\}+r'_1\{1+r_1r_2\ldots r_n\,\varepsilon,r_1,r_2,\ldots,r_n\}\,.
$$
Finally, we show that $f$ is an additively multilinear map. Indeed, by Lemma~\ref{lemma:cool}, there is an equality in $K_2^M\big(R[\varepsilon]\big)$
$$
\{1+r_{11}\,\varepsilon,r_{11}\}+\ldots+\{1+r_{1N}\,\varepsilon,r_{1N}\}=0\,.
$$
Applying the action of the element $r_2\ldots r_n\in R$ (see formula~\eqref{eq:action}), we obtain the equality
$$
\{1+r_{11}r_2\ldots r_n\,\varepsilon,r_{11}\}+\ldots+\{1+r_{1N}r_2\ldots r_n\,\varepsilon,r_{1N}\}=0\,.
$$
Taking the product with the symbol $\{r_2,\ldots, r_n\}\in K^M_{n-1}(R)$, we get the required equality.

Thus by Lemma~\ref{lemma:dull}, we obtain a homomorphism of $R$-modules $F\colon \Omega^n_R\to TK^M_{n+1}(R)$ such that for all \mbox{$r_1,\ldots,r_n\in R^*$} and $s\in R$, we have that
\begin{equation}\label{eq:fbar}
F(sdr_1\wedge\ldots\wedge dr_n)=\{1+sr_1\ldots r_n\,\varepsilon,r_1,\ldots,r_n\}\,.
\end{equation}
By Proposition~\ref{corol:milnor}$(i)$, $TK^M_{n+1}(R)$ is generated by symbols from the right hand side of~\eqref{eq:fbar}. Also,~$\Omega^n_R$ is generated  by differential forms from the left hand side of~\eqref{eq:fbar}. Therefore formula~\eqref{eq:fbar} together with Example~\ref{rem:explB} imply that the maps $B$ and $F$ are inverse to each other. This finishes the proof of Theorem~\ref{thm:tangentMilnor}.
\end{proof}

\quash{\begin{rmk}
Actually, the above reasonings show that $B\colon K_2^M\big(R[\varepsilon]\big)\to \Omega^1_R$ is an isomorphism for any $R$ such that $\frac{1}{2}\in R$ and $R$ is weakly $4$-fold stable (not necessarily weakly $5$-fold stable).
\end{rmk}}

S.\,O.~Gorchinskiy

Steklov Mathematical Institute of RAS

{\it E-mail}: gorchins@mi.ras.ru

\medskip

D.\,V.~Osipov

Steklov Mathematical Institute of RAS

{\it E-mail}: d\_osipov@mi.ras.ru

\end{document}